\newtheorem{theorem}{Theorem}[section]
\newtheorem{lemma}[theorem]{Lemma}
\theoremstyle{definition}
\theoremstyle{remark}
\newtheorem{remark}[theorem]{Remark}
\numberwithin{equation}{section}
\begin{document}

 \title[Polyharmonic Dirichlet and Neumann eigenvalues]{Inequalities between Dirichlet and Neumann eigenvalues of the Polyharmonic operators}


\author{Luigi Provenzano}
\address{Universit\`a degli Studi di Padova, Dipartimento di Matematica, Via Trieste 63, 35121 Padova, Italy}
\curraddr{}
\email{luigi.provenzano@math.unipd.it}
\thanks{The author is member of the Gruppo Nazionale per l'Analisi Matematica, la
Probabilit\`a e le loro Applicazioni (GNAMPA) of the Istituto Nazionale di Alta Matematica (INdAM)}


\subjclass[2010]{Primary 35P15; Secondary 35J30, 35P05}
\keywords{Dirichlet and Neumann eigenvalues, polyharmonic operators, inequalities between eigenvalues}
\date{}

\dedicatory{}

\commby{}

\begin{abstract}
We prove that $\mu_{k+m}^m <\lambda_k^m$, where $\mu_k^m$ ($\lambda_k^m$) are the eigenvalues of $(-\Delta)^m$ on $\Omega\subset\mathbb R^d$, $d\geq 2$, with Neumann (Dirichlet) boundary conditions.
\end{abstract}

\maketitle





\section{Introduction and main result}

Let $m\in\mathbb N$. Let $\Omega$ be a bounded domain (i.e., a bounded connected open set) in $\mathbb R^d$, $d\geq 2$, such that the embedding $H^m(\Omega)\subset L^2(\Omega)$ is compact. We denote here by $H^m(\Omega)$ the standard Sobolev space of (complex valued) functions in $L^2(\Omega)$ with weak derivatives up to order $m$ in $L^2(\Omega)$, endowed with the standard scalar product
\begin{equation}
\langle u,v\rangle_{H^m(\Omega)}=\int_{\Omega}D^m u\cdot D^mv + u \bar v dx,
\end{equation}
and induced norm
\begin{equation}
\|u\|_{H^m(\Omega)}=\left(\int_{\Omega}|D^m u|^2 + |u|^2 dx\right)^{1/2},
\end{equation}
where
\begin{equation}
D^mu\cdot D^mv=\sum_{j_1,...,j_m=1}^d\left(\partial_{j_1\cdots j_m}^m u\right)\overline{\left(\partial_{j_1\cdots j_m}^m v\right)}.
\end{equation}
Here and in what follows $\partial^m_{j_1\cdots j_m}$ denotes $\frac{\partial^m}{\partial{x_{j_1}}\cdots\partial{x_{j_m}}}$. 

We consider the following variational problem:
\begin{equation}\label{problemV}
\int_{\Omega}D^mu\cdot D^m\phi dx=\Lambda\int_{\Omega}u\bar\phi dx\,,\ \ \ \forall\phi\in H(\Omega),
\end{equation}
in the unknowns $u\in H(\Omega)$ and $\Lambda\in\mathbb R$. Here $H(\Omega)$ denotes a subspace of $H^m(\Omega)$ containing $H^m_0(\Omega)$, the closure in $H^m(\Omega)$ of $C^{\infty}_c(\Omega)$. From the hypotheses on $H(\Omega)$ and the fact that $H^m(\Omega)\subset L^2(\Omega)$ is compact, it follows that problem \eqref{problemV} admits an increasing sequence of non-negative eigenvalues of finite multiplicity diverging to $+\infty$. 

We will denote by $\left\{\lambda_k^m\right\}_{k\in\mathbb N}$ the eigenvalues of \eqref{problemV} when $H(\Omega)=H^m_0(\Omega)$. Here we agree to repeat the eigenvalues according to their multiplicity. We will denote by $\left\{u_k^m\right\}_{k\in\mathbb N}$ the corresponding eigenfunctions, normalized by $\int_{\Omega}u_i^m u_j^mdx=\delta_{ij}$ for all $i,j\in\mathbb N$. It is easy to check that $\lambda_1^m>0$ for all $m\in\mathbb N$.

We will denote by $\left\{\mu_k^m\right\}_{k\in\mathbb N}$ the eigenvalues of \eqref{problemV} when $H(\Omega)=H^m(\Omega)$, where we agree to repeat the eigenvalues according to their multiplicity. We will denote by $\left\{v_k^m\right\}_{k\in\mathbb N}$ the corresponding eigenfunctions, normalized by $\int_{\Omega}v_i^m v_j^mdx=\delta_{ij}$ for all $i,j\in\mathbb N$. It is easy to check that $\mu_1^m=\cdots=\mu_{n(d,m)}^m=0$ and $\mu_{n(d,m)+1}^m>0$ for all $m\in\mathbb N$, where $n(d,m)$ denotes the dimension of the space of polynomials of degree at most $m-1$ in $\mathbb R^d$.

We will call $\left\{\lambda_k^m\right\}_{k\in\mathbb N}$ the {\it Dirichlet spectrum} of the polyharmonic operator $(-\Delta)^m$ and $\left\{\mu_k^m\right\}_{k\in\mathbb N}$ the {\it Neumann spectrum} of the polyharmonic operator $(-\Delta)^m$. In fact problem \eqref{problemV} is the weak formulation of
\begin{equation}
\begin{cases}
(-\Delta)^m u=\Lambda u, & {\rm in\ }\Omega,\\
u=\frac{\partial u}{\partial\nu}=\cdots=\frac{\partial^{m-1}u}{\partial\nu^{m-1}}=0, & {\rm on\ }\partial\Omega
\end{cases}
\end{equation}
when $H(\Omega)=H^m_0(\Omega)$, and of 
\begin{equation}
\begin{cases}
(-\Delta)^m u=\Lambda u, & {\rm in\ }\Omega,\\
\mathcal N_1 u=\cdots=\mathcal N_{m}u=0, & {\rm on\ }\partial\Omega
\end{cases}
\end{equation}
when $H(\Omega)=H^m(\Omega)$. Here $\mathcal N_i u$, $i=1,...,m$,  are uniquely defined complementing boundary operators of degree at most $2m-1$ which correspond to the classical Neumann boundary conditions for the polyharmonic operator. In particular, if $m=1$ then $\mathcal N_1u=\frac{\partial u}{\partial\nu}$; if $m=2$ then $\mathcal N_1 u=\frac{\partial^2 u}{\partial\nu^2}$ and $\mathcal N_2 u={\rm div}_{\partial\Omega}(D^2u\cdot\nu)_{\partial\Omega}+\frac{\partial\Delta u}{\partial\nu}$. It is in general a quite involved task to write the explicit form of the Neumann boundary conditions for $m\geq 3$.

The eigenvalues $\lambda_k^m$ admit the following variational characterization
\begin{equation}\label{minmaxD}
\lambda_k^m=\min_{\substack{W\subset H^m_0(\Omega)\\{\rm dim}(W)=k}}\max_{\substack{w\in W\\w\ne 0}}\frac{\int_{\Omega}|D^m w|^2dx}{\int_{\Omega}|w|^2dx},
\end{equation} 
and the eigenvalues $\mu_k^m$ admit the following variational characterization
\begin{equation}\label{minmaxN}
\mu_k^m=\min_{\substack{W\subset H^m(\Omega)\\{\rm dim}(W)=k}}\max_{\substack{w\in W\\w\ne 0}}\frac{\int_{\Omega}|D^m w|^2dx}{\int_{\Omega}|w|^2dx}.
\end{equation} 
From \eqref{minmaxD} and \eqref{minmaxN} it follows immediately that $\mu_k^m\leq\lambda_k^m$ for all $m,k\in\mathbb N$.

General inequalities between the eigenvalues of the Dirichlet and Neumann Laplacian on $\Omega\subset\mathbb R^d$, $d\geq 2$, have been widely studied in the past. The study of such inequalities was initiated by Payne \cite{payne_ineq} who proved that $\mu_{k+2}^1<\lambda_k^1$ for a bounded smooth convex domain in $\mathbb R^2$. He also conjectured in \cite{paynecomments} that in general $\mu_{k+1}^1\leq\lambda_k^1$ should hold (and even more, under additional assumptions like convexity). Generalization to higher dimension of Payne's result were achieved independently by Aviles \cite{aviles} and Levine-Weinberger \cite{levine_weinberger}. In particular, Aviles proved that $\mu_{k+1}^1<\lambda_k^1$ for smooth bounded domains in $\mathbb R^d$ with boundary of non-negative mean curvature. Levine and Weinberger established a family of inequalities of the form $\mu_{k+r}^1<\lambda_k^1$, $1\leq r\leq d$, under a series of assumptions on the sign of the principal curvatures of the boundary. In particular, if the domain is convex, $r=d$ (extending the result of Payne), while if the mean curvature is non-negative, $r=1$ (recovering Aviles' result). A definitive answer to Payne's conjecture was given by Friedlander \cite{friedlander_eigen} who proved $\mu_{k+1}^1<\lambda_k^1$ for a general smooth bounded domain in $\mathbb R^d$. An alternative proof (which does not require smoothness of the boundary) has been proposed in \cite{filonov}.

We will prove an analogue of Friedlander's result for the polyharmonic operator $(-\Delta)^m$ namely,

\begin{theorem}\label{main}
Let $\Omega$ be a bounded domain in $\mathbb R^d$, $d\geq 2$, such that the embedding $H^m(\Omega)\subset L^2(\Omega)$ is compact. Then
\begin{equation*}
\mu_{k+m}^m<\lambda_k^m
\end{equation*}
for all $m,k\in\mathbb N$. 
\end{theorem}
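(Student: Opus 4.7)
The plan is to adapt Filonov's test-subspace technique from the Laplacian case to the polyharmonic setting. I will build a $(k+m)$-dimensional trial space $W\subset H^m(\Omega)$ consisting of the first $k$ Dirichlet eigenfunctions augmented with $m$ plane-wave eigenfunctions of $(-\Delta)^m$ on $\mathbb{R}^d$, and then apply the min-max characterization \eqref{minmaxN}.

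Concretely, fix $\xi_1,\dots,\xi_m\in\mathbb{R}^d$ pairwise distinct with $|\xi_j|^{2m}=\lambda_k^m$ (possible since $d\geq 2$ makes the set $\{|\xi|^{2m}=\lambda_k^m\}$ a sphere of positive dimension) and set
\[
W := \mathrm{span}\bigl\{u_1^m,\dots,u_k^m,\,e^{i\xi_1\cdot x},\dots,e^{i\xi_m\cdot x}\bigr\}.
\]
That $\dim W=k+m$ follows from unique continuation: any nontrivial linear combination of the $e^{i\xi_j\cdot x}$'s is real-analytic on $\mathbb{R}^d$, so it cannot agree on $\Omega$ with an element of $H^m_0(\Omega)$, whose extension by zero across $\partial\Omega$ would violate analytic continuation.

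The decisive observation is that $(-\Delta)^m e^{i\xi_j\cdot x}=\lambda_k^m\,e^{i\xi_j\cdot x}$, so integration by parts (whose boundary terms vanish because the Dirichlet eigenfunctions lie in $H^m_0(\Omega)$) yields
\[
\int_\Omega D^m f\cdot\overline{D^m g}\,dx=\lambda_k^m\int_\Omega f\,\bar g\,dx
\]
whenever $f\in\mathrm{span}\{u_i^m\}$ and $g$ is in the span of the exponentials. Consequently the quadratic form $Q(w):=\|D^m w\|^2-\lambda_k^m\|w\|^2$ splits additively on $W$:
\[
Q(w)=\sum_{i=1}^k|a_i|^2(\lambda_i^m-\lambda_k^m)+\bigl(\|D^m g\|^2-\lambda_k^m\|g\|^2\bigr),
\]
and the first summand is $\leq 0$ by the spectral expansion of $f$. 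A suitable choice of the $\xi_j$ (possibly together with a limiting passage producing generalized plane-wave eigenfunctions of the form $(v\cdot x)^\ell e^{i\xi\cdot x}$) should force the second summand $\leq 0$ as well, yielding $\mu_{k+m}^m\leq\lambda_k^m$ via \eqref{minmaxN}.

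The strict inequality then comes from the rigidity of the equality case: if $\mu_{k+m}^m=\lambda_k^m$, a Morse-index / signature analysis of $Q|_W$ forces a nonzero realizer whose plane-wave part would have to satisfy all the Neumann boundary conditions $\mathcal{N}_1 u=\cdots=\mathcal{N}_m u=0$ on $\partial\Omega$ simultaneously, an overdetermined Cauchy problem ruled out by unique continuation (exponentials cannot be Neumann eigenfunctions of $(-\Delta)^m$ on a bounded domain). The hard part is the penultimate step: producing $m$ plane waves on whose span $Q$ is nonpositive, since the Gram-type matrix $\bigl((\xi_j\cdot\xi_{j'})^m-\lambda_k^m\bigr)\int_\Omega e^{i(\xi_j-\xi_{j'})\cdot x}\,dx$ has vanishing diagonal and is generically indefinite---this is what drives the $m$-step gain over $\mu_{k+1}^m<\lambda_k^m$ and demands a careful construction of the wave vectors or a substitution with generalized eigenfunctions.
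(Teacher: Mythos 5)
Your outline follows the right general strategy (first $k$ Dirichlet eigenfunctions plus an $m$-dimensional space of solutions of $(-\Delta)^m v=\lambda_k^m v$, tested in \eqref{minmaxN}), but it has a genuine gap exactly at the step you yourself flag as ``the hard part'', and that step is the whole point for $m\geq 2$. With $m$ \emph{distinct real} wave vectors $\xi_1,\dots,\xi_m$ on the sphere $|\xi|^{2m}=\lambda_k^m$, the form $Q(g)=\int_\Omega|D^mg|^2dx-\lambda_k^m\int_\Omega|g|^2dx$ restricted to ${\rm span}\{e^{i\xi_j\cdot x}\}$ is represented by the Hermitian matrix with entries $\bigl((\xi_j\cdot\xi_{j'})^m-\lambda_k^m\bigr)\int_\Omega e^{i(\xi_j-\xi_{j'})\cdot x}dx$, whose diagonal vanishes; a nonzero Hermitian matrix with zero trace is indefinite, and you have no mechanism to force all off-diagonal entries to vanish (the factors $(\xi_j\cdot\xi_{j'})^m-\lambda_k^m$ are nonzero for distinct, non-antipodal vectors, and you cannot in general arrange the Fourier transform of $\chi_\Omega$ to vanish at all the differences $\xi_j-\xi_{j'}$). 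So no choice of $m\geq 2$ distinct real plane waves yields $Q\leq 0$ on their span, the proposed ``limiting passage to generalized eigenfunctions $(v\cdot x)^\ell e^{i\xi\cdot x}$'' is not carried out (and for such functions neither the pointwise identity nor even the integrated inequality $\int_\Omega|D^mv|^2dx\leq\lambda_k^m\int_\Omega|v|^2dx$ is evident), and the strictness argument is also not a proof as stated: an equality realizer has the form $w=u+g$, and nothing forces its plane-wave part $g$ alone to satisfy the Neumann conditions.

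The paper closes precisely this gap by a different choice of the auxiliary space: fix \emph{one} real direction $\omega$ with $|\omega|^{2m}=\lambda_k^m$ and take $V_\omega={\rm span}\{e^{i\xi_j\omega\cdot x}\}_{j=1}^m$, where $\xi_j=e^{2\pi i j/m}$ are the $m$-th roots of unity, i.e.\ $m$ exponentials whose (complex) wave vectors are all proportional to the same real $\omega$. Since $\xi_j^m=1$, every $m$-th derivative of $e^{i\xi_j\omega\cdot x}$ equals $i^m\omega_{j_1}\cdots\omega_{j_m}e^{i\xi_j\omega\cdot x}$ with a $j$-independent prefactor, so both $(-\Delta)^mv=\lambda_k^mv$ and the \emph{pointwise} identity $|D^mv|^2=\lambda_k^m|v|^2$ hold for every $v\in V_\omega$, cross terms included; the mixed terms with the Dirichlet block are handled by integration by parts exactly as in your computation, and $H^m_0(\Omega)\cap V_\omega=\{0\}$ (hence $\dim(U+V_\omega)=k+m$) follows from a Vandermonde argument. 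Strictness is then obtained not by a unique-continuation claim about the exponential part, but by varying $\omega$ over the sphere $|\omega|^{2m}=\lambda_k^m$ (here $d\geq 2$ is used): the Neumann eigenspace $N$ of $\lambda_k^m$ is finite dimensional, the spaces $V_\omega$ for distinct $\omega$ intersect trivially, and $U\cap N=\{0\}$, so some $\omega$ gives $(U+V_\omega)\cap N=\{0\}$, while $\mu_{k+m}^m=\lambda_k^m$ would force a nonzero element of $(U+V_\omega)\cap N$. If you want to salvage your outline, replacing the $m$ real plane waves by this one-direction, roots-of-unity family is the missing idea.
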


\begin{remark}
We remark that when $d=1$ it is easy to verify that $\mu^m_{k+m}=\lambda_k^m$ for all $m,k\in\mathbb N$. Let $\Omega=(0,1)$. The general solution of $(-1)^m u^{(2m)}(x)=\lambda u(x)$ with $\lambda\in\mathbb R$, $\lambda>0$, is given by $u(x)=\sum_{j=0}^{2m-1}\alpha_j \exp\left(e^{\frac{i\pi(2j+1)}{2m}}\lambda^{\frac{1}{2m}}x\right)$ for some $\alpha_0,...,\alpha_{2m-1}\in\mathbb C$. Dirichlet boundary conditions read $u(0)=u(1)=u'(0)=u'(1)=\cdots=u^{(m-1)}(0)=u^{(m-1)}(1)=0$,  while Neumann boundary conditions read $u^{(m)}(0)=u^{(m)}(1)=\cdots=u^{(2m-1)}(0)=u^{(2m-1)}(1)=0$. In both cases, boundary conditions produce a homogeneous system of $2m$ equations in $2m$ unknowns $\alpha_0,...,\alpha_{2m-1}$ which has a solution if and only if the determinant of the associated matrix is zero. Let us indicate by  ${\rm det}M_D(\lambda)$ ${\rm det}M_N(\lambda)$ the determinants of the systems produced by the Dirichlet and Neumann boundary conditions, respectively. All the positive Dirichlet (Neumann) eigenvalues are exactly the solutions of  ${\rm det}M_D(\lambda)=0$ (${\rm det}M_N(\lambda)=0$). It is straightforward to see that the two equations ${\rm det}M_D(\lambda)=0$ and ${\rm det}M_N(\lambda)=0$ have the same positive solutions. Moreover, by standard Sturm-Liouville theory, all the positive eigenvalues of both Dirichlet and Neumann problems have multiplicity one. The first Dirichlet eigenvalue $\lambda_1^m$ is positive. On the contrary, $0$ is an eigenvalue of the Neumann problem of multiplicity $m$, in fact the functions $1,x,...,x^{m-1}$ form a basis of the corresponding eigenspace. We conclude then that $\mu_{k+m}^m=\lambda_k^m$ for all $m,k\in\mathbb N$.
\end{remark}

\section{Proof of Theorem \ref{main}}

We will prove now Theorem \ref{main}. We prove first the following lemma.

\begin{lemma}\label{independence}
Let $\left\{\zeta_j\right\}_{j=1}^n\subset\mathbb C$ be a set of $n$ distinct complex numbers and let $\omega=(\omega_1,...,\omega_d)\in\mathbb R^d$, $\omega\ne 0$, be fixed. Then $\left\{e^{\zeta_j\omega\cdot x}\right\}_{j=1}^n\subset C^{\infty}(\mathbb R^d,\mathbb C)$ is a linearly independent set of functions.
\end{lemma}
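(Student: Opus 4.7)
The plan is to reduce the multivariable statement to a one-variable statement and then invoke the classical linear independence of exponentials with distinct exponents.

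First I would restrict the claimed identity to a well-chosen line. Suppose $\sum_{j=1}^n c_j e^{\zeta_j \omega\cdot x}=0$ for all $x\in\mathbb R^d$. Since $\omega\ne 0$, evaluate this identity along the line $x=t\omega$, $t\in\mathbb R$. Writing $\alpha_j:=\zeta_j|\omega|^2$, the hypothesis becomes
\begin{equation*}
\sum_{j=1}^n c_j e^{\alpha_j t}=0\qquad\text{for all }t\in\mathbb R,
\end{equation*}
and the numbers $\alpha_1,\dots,\alpha_n$ are still pairwise distinct because $|\omega|^2\ne 0$. So it suffices to prove the one-variable statement: if $\alpha_1,\dots,\alpha_n\in\mathbb C$ are distinct and $\sum c_j e^{\alpha_j t}\equiv 0$ on $\mathbb R$, then all $c_j$ vanish.

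For the one-variable step I would differentiate $k$ times in $t$ and evaluate at $t=0$, for $k=0,1,\dots,n-1$. This yields the linear system $\sum_{j=1}^n c_j \alpha_j^k=0$, $k=0,\dots,n-1$, whose coefficient matrix is the $n\times n$ Vandermonde matrix $(\alpha_j^k)_{k,j}$. Since the $\alpha_j$ are pairwise distinct this matrix is invertible, forcing $c_1=\cdots=c_n=0$.

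There is no real obstacle here; the only point worth being careful about is that $\omega\ne 0$ is used twice, first to guarantee that the restriction to the line $x=t\omega$ is not constant in $t$, and second to ensure that the scaled exponents $\alpha_j=\zeta_j|\omega|^2$ remain distinct. An alternative (and essentially equivalent) approach would be induction on $n$: divide by $e^{\alpha_n t}$, differentiate once to kill the $c_n$ term, and apply the inductive hypothesis to the $n-1$ distinct exponents $\alpha_j-\alpha_n$, $j<n$. I would still prefer the Vandermonde route for its brevity.
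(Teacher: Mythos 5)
Your proof is correct and follows essentially the same route as the paper: reduce to a one-variable family of exponentials with distinct exponents (the paper differentiates in $x_1$ assuming $\omega_1\ne 0$, you restrict to the line $x=t\omega$, which is the same idea), then differentiate $n-1$ times, evaluate at a point, and invoke the invertibility of the Vandermonde matrix.
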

\begin{proof}
Assume that there exist $\alpha_1,...,\alpha_n\in\mathbb C$ with $\alpha_j\ne 0$ for some $j$  such that $\sum_{j=1}^n\alpha_j e^{\zeta_j\omega\cdot x}=0$. 
We can assume without loss of generality that $\omega_1\ne 0$. Let us differentiate $n-1$ times the equality $\sum_{j=1}^n\alpha_j e^{\zeta_j\omega\cdot x}=0$ with respect to $x_1$. We find that $\sum_{j=1}^n\alpha_j (\zeta_j w_1)^k e^{\zeta_j\omega\cdot x}=0$ for all $k=0,...,n-1$ and $x\in\mathbb R^d$. In particular, for $x=0$ we have $\sum_{j=1}^n\alpha_j (\zeta_j\omega_1)^k=0$ for all $k=0,...,n-1$. This is a system of $n$ equations in $n$ unknowns $\alpha_1,...,\alpha_n$. The associated matrix is a square Vandermonde matrix with non-zero determinant since all the $\zeta_j$ are distinct by hypothesis. Then necessarily $\alpha_1=\cdots=\alpha_n=0$, which contradicts the fact that $\alpha_j\ne 0$ for some $j$. This concludes the proof.
\end{proof}

We are now ready to prove Theorem \ref{main}.

\begin{proof}[Proof of Theorem \ref{main}]
Let $U$ denote the subspace of $H^m_0(\Omega)$ generated by the first $k$ Dirichlet eigenfunctions $u_1^m,...,u_k^m$. By hypothesis, ${\rm dim}(U)=k$.  Let us also denote by $\left\{\xi_j\right\}_{j=1}^m$  the set of the $m$-th roots of the unity, namely $\xi_j=e^{\frac{2j\pi i}{m}}$ for $j=0,...,m-1$. Let $\omega\in\mathbb R^d$ be a non-zero vector and let $V_{\omega}$ be the space generated by $\left\{e^{i\xi_j\omega\cdot x}{|_{\Omega}}\right\}_{j=1}^m$.\\
From Lemma \ref{independence} it follows that ${\rm dim}(V_{\omega})=m$ (we can assume without loss of generality that $0\in\Omega$). 

We observe now that $H^m_0(\Omega)\cap V_{\omega}=\left\{0\right\}$. In fact we can proceed as in the proof of Lemma \ref{independence} and assume that there exist $\alpha_1,...,\alpha_m$ with $\alpha_j\ne 0$ for some $j$ such that $v=\sum_{j=1}^m\alpha_j e^{i\xi_j\omega\cdot x}\in H^m_0(\Omega)$. Assume $\omega_1\ne 0$ and differentiate $m-1$ times the function $v$ with respect to $x_1$. We find that $\sum_{j=1}^m\alpha_j (i\xi_j w_1)^k e^{i\xi_j\omega\cdot x}=0$ for all $x\in\partial\Omega$ and $k=0,...,m-1$. Again, this is not possible unless $\alpha_1=\cdots=\alpha_m=0$. 

This proves that the sum $U+V_{\omega}$ is a direct sum for all choices of $\omega\ne 0$, hence ${\rm dim}(U+V_{\omega})=k+m$.  It is easy to verify that $V_{\omega}\cap V_z=\left\{0\right\}$ for all non-zero $\omega,z\in\mathbb R^d$, $\omega\ne z$ (see also Lemma \ref{independence}). Let $N\subset H^m(\Omega)$ be the space generated by all the Neumann eigenfunctions corresponding to the eigenvalue $\lambda_k^m$. If $\lambda_k^m$ is not a Neumann eigenvalue, we set $N=\left\{0\right\}$. Since all the eigenvalues have finite multiplicity, ${\rm dim}(N)<\infty$.

We note that $U\cap N=\left\{0\right\}$. This is trivial if $\lambda_k^m$ is not a Neumann eigenvalue since $N=\left\{0\right\}$. Otherwise, assume that there estists $u\ne 0\in U\cap N$. Hence
$$
\int_{\Omega}D^m u\cdot D^m \phi dx=\lambda_k^m\int_{\Omega}u\bar\phi dx\,,\ \ \ \forall\phi\in H^m(\Omega).
$$
Let us denote by $\tilde u$ the extension by $0$ of $u$ to $\mathbb R^d$ which belongs to $H^m(\mathbb R^d)$, since $u\in U\subset H^m_0(\Omega)$. Then
$$
\int_{\Omega}D^m \tilde u\cdot D^m \phi dx=\lambda_k^m\int_{\Omega}\tilde u\bar\phi dx\,,\ \ \ \forall\phi\in H^m(\mathbb R^d).
$$
This implies that $\tilde u$ is analytic in $\mathbb R^n$ and satisfies $(-\Delta)^m\tilde u=\lambda_k^m \tilde u$, and therefore $\tilde u=0$ in $\mathbb R^d$. In particular $u=0$, a contradiction.

We conclude then that there exists $\omega\in\mathbb R^d$ with $|\omega|^{2m}=\lambda_k^m$ such that $(U+V_{\omega})\cap N=\left\{0\right\}$. In fact, we have that $U\cap N=\left\{0\right\}$ , $V_{\omega}\cap V_z=\left\{0\right\}$ for all $z\ne\omega$ and $U\cap V_{\omega}=\left\{0\right\}$ for all $\omega$. Since we can choose any $\omega\in \mathbb R^d$ with $|\omega|^{2m}=\lambda_k^m$ and the space of such $\omega$ has infinite dimension (if $d\geq 2$), then we necessarily find a $\omega$ such that $(U+V_{\omega})\cap N=\left\{0\right\}$ (the space $N$ has finite dimension).

We set then $V:=V_{\omega}$. Now let $W:=U+V$. Clearly $W\subset H^m(\Omega)$ with ${\rm dim}(W)=k+m$. From \eqref{minmaxN} it follows immediately that
\begin{equation}\label{max}
\mu^m_{k+m}\leq\max_{\substack{w\in W\\w\ne 0}}\frac{\int_{\Omega}|D^m w|^2dx}{\int_{\Omega} |w|^2dx}.
\end{equation}
We will prove now that 
\begin{equation}
\int_{\Omega}|D^m w|^2dx\leq\lambda_k^m \int_{\Omega} |w|^2dx
\end{equation}
for all $w\in W$. Any $w\in W$ is of the form $w=u+v$ with $u\in U$ and $v\in V$. We have
\begin{multline}\label{chain0}
\int_{\Omega}|D^m w|^2dx= \int_{\Omega}\left(|D^m u|^2+|D^m v|^2 + 2{\rm Re} \left(D^m u\cdot D^m v\right)\right) dx\\
=\int_{\Omega}\left(|D^m u|^2+|D^m v|^2 + 2{\rm Re} \left(u (-\Delta)^m \bar v\right)\right) dx.
\end{multline}
The second equality is a consequence of the fact that $u\in H^m_0(\Omega)$ and of $m$ integrations by parts. From the definition of $U$ it follows that 
\begin{equation}\label{dir}
\int_{\Omega}|D^mu|^2dx\leq\lambda_k^m\int_{\Omega}|u|^2dx
\end{equation}
for all $u\in U$. Assume to know that 
\begin{equation}\label{quad}
|D^m v|^2=\lambda_k^m |v|^2
\end{equation}
 and 
\begin{equation}\label{poly}
(-\Delta)^mv=\lambda_k^mv
\end{equation}
for all $v\in V$. Then from \eqref{chain0}, \eqref{dir}, \eqref{quad} and \eqref{poly} it follows
\begin{multline}\label{chain1}
\int_{\Omega}|D^m w|^2dx\leq \lambda_k^m\int_{\Omega}\left(|u|^2+|v|^2 + 2{\rm Re} \left(u\bar v\right)\right) dx\\
=\lambda_k^m\int_{\Omega}|u+v|^2 dx=\lambda_k^m\int_{\Omega}|w|^2 dx,
\end{multline}
which implies $\mu_{k+m}^m\leq\lambda_{k}^m$ by \eqref{max}. Then, in order to prove $\mu_{k+m}^m\leq\lambda_{k}^m$ we have to prove \eqref{quad} and \eqref{poly}. Let $v\in V$, i.e., $v=\sum_{j=1}^m\alpha_j e^{i\xi_j\omega\cdot x}$ for some $\alpha_1,...,\alpha_m\in\mathbb C$. We have
\begin{equation}
-\Delta v=\sum_{j=1}^m\alpha_j \xi_j^2|\omega|^2 e^{i\xi_j\omega\cdot x},
\end{equation}
which immediately implies
\begin{equation}
(-\Delta)^m v=\sum_{j=1}^m\alpha_j \xi_j^{2m}|\omega|^{2m} e^{i\xi_j\omega\cdot x}=\lambda_k^m \sum_{j=1}^m\alpha_j e^{i\xi_j\omega\cdot x}=\lambda_k^m v,
\end{equation}
and hence the validity of \eqref{poly}. Moreover,
\begin{multline}
|D^m v|^2=\sum_{j_1,...,j_m=1}^d\left|\partial^m_{j_1\cdots j_m}v\right|^2=\sum_{j_1,...,j_m=1}^d\left|\sum_{j=1}^m\alpha_j i^m \xi_j^m\omega_{j_1}\cdots\omega_{j_m}e^{i\xi_j\omega\cdot x}\right|^2\\
=\sum_{j_1,...,j_m=1}^d\sum_{k,l=1}^m\alpha_k\overline{\alpha_l}\omega_{j_1}^2\cdots\omega_{j_m}^2 e^{i\xi_k\omega\cdot x}e^{-i\xi_l\omega\cdot x}\\
=\sum_{k,l=1}^m\alpha_k\overline{\alpha_l}e^{i\xi_k\omega\cdot x}e^{-i\xi_l\omega\cdot x}\sum_{j_1,...,j_m=1}^d\omega_{j_1}^2\cdots\omega_{j_m}^2\\
=|\omega|^{2m}\sum_{k,l=1}^m\alpha_k\overline{\alpha_l}e^{i\xi_k\omega\cdot x}e^{-i\xi_l\omega\cdot x}=\lambda_k^m\left|\sum_{j=1}^m\alpha_je^{i\xi_j\omega\cdot x}\right|^2=\lambda_k^m|v|^2,
\end{multline}
therefore \eqref{quad} holds.

We have then proved that $\mu_{k+m}^m\leq\lambda_k^m$ for all $m,k\in\mathbb N$. We observe that the inequality is actually strict. In fact, assume that $\mu_{k+m}^m=\lambda_k^m$ is a Neumann eigenvalue. Then equality holds in \eqref{max} and this implies that there exists a non-zero $u\in W\cap N$, but this is not possible since $W\cap N=\left\{0\right\}$ by construction. This concludes the proof.
\end{proof}

We conclude this note with a few remarks.

\begin{remark}
We remark that the proof of the strict inequality in Theorem \ref{main} does not work in dimension $d=1$ since we have only two choices of $\omega\in\mathbb R$ with $|\omega|^{2m}=\lambda_k^m$, and not necessarily $(U+V)\cap N=\left\{0\right\}$ (in dimension $d\geq 2$ we can choose any vector $\omega$ of the sphere of radius $(\lambda_k^m)^{\frac{1}{2m}}$ and hence we can always find a vector $\omega$ such that $(U+V)\cap N=\left\{0\right\}$). For example, let us consider the case of the Laplacian in one dimension. We can choose the space $V$ in the proof of Theorem \ref{main} either to be the space generated by $e^{i(\lambda_k^1)x}$ or the space generated by $e^{-i(\lambda_k^1)^{1/2}x}$. It is easy to see that there exists $v\in (U+V)\cap N$, $v\ne 0$. In fact, any element of $U+V$ is of the form $\sum_{j=1}^m\alpha_j\sin((\lambda_j^1)^{1/2}x)+\beta e^{i(\lambda_k^1)^{1/2}x}$ (we have chosen here the space $V$ generated by $e^{i(\lambda_k^1)^{1/2}x}$), for some $\alpha_1,...,\alpha_k,\beta\in\mathbb C$. Choose now $\alpha_j=0$ for $j=1,...,k-1$ and $\alpha_k=-i\beta$. Then $v=\cos((\lambda_k^1)^{1/2}x)$ which is clearly a non-zero function in $N$.
\end{remark}

\begin{remark}
We recall that the eigenvalue $0$ of the operator $(-\Delta)^m$ with Neumann boundary conditions has multiplicity $n(d,m)$, the dimension of the the space of polynomials in $\mathbb R^d$ of degree at most $m-1$. We have $n(d,m)>m$ for all $m\in\mathbb N$ and $d\geq 2$ (while $n(1,m)=m$ for all $m$). Hence it is natural to conjecture that actually $\mu^m_{n(d,m)+k}\leq\lambda_k^m$ for all $m,k\in\mathbb N$ (or even the strict inequality). If we want to repeat the proof of Theorem \ref{main} in order to obtain this last inequality, we should find for all $\lambda_k^m$ a $n(d,m)$-dimensional subspace $V$ of $H^m(\Omega)$ of functions such that, for all $v\in V$
\begin{enumerate}[i)]
\item $(-\Delta)^m v=\lambda_k^m v$ (in the $L^2(\Omega)$ sense);
\item $|D^mv|^2=\lambda_k^m|v|^2$ (in the $L^2(\Omega)$ sense);
\item $H^m_0(\Omega)\cap V=\left\{0\right\}$.
\end{enumerate}
If we want the strict inequality, we shall additionally require that $V+U$ is linearly independent with $N$ ($U,N$ are as in the proof of Theorem \ref{main}).

We note that such a space $V$ cannot be generated by $n(d,m)$ functions of the form $e^{\omega_j\cdot x}$ with $\omega_{j}\in\mathbb C^d$ for all $j=1,...,n(d,m)$, since in general we cannot find more that $m$ distinct $\omega_j\in\mathbb C^d$ such that $v=\sum_{j=1}^{n(d,m)}e^{\omega_j\cdot x}$ satisfy $i)$, $ii)$ and $iii)$ (one can easily verify such claim for $m=1$, $d=2$ and $n(2,2)=3$). We also note that actually less than $ii)$ is needed to conclude as in Theorem \ref{main}: it is sufficient that $\int_{\Omega}|D^mv|^2dx\leq\lambda_k^m\int_{\Omega}|v|^2dx$ for all $v\in V$, however this seems to be false in general if $V$ is generated by more than $m$ functions of the form $e^{\omega_j\cdot x}$.
\end{remark}

\begin{remark}
One may think to prove inequalities among Dirichlet and Neumann eigenvalues of the polyharmonic operators by proving inequalities among $\lambda_k^1$ and $\lambda_k^m$ and inequalities among $\mu_k^1$ and $\mu_k^m$ and combining them with Friedlander's result \cite{friedlander_eigen}.

Inequalities between Dirichlet eigenvalues of the Laplacian and of the polyharmonic operators can be easily obtained via min-max principle \eqref{minmaxD} and suitable integrations by parts. In particular, for any $u\in H^{m+1}_0(\Omega)$, $m\geq 1$ we have
\begin{equation}\label{m1}
\int_{\Omega}|D^mu|^2dx\leq\left(\int_{\Omega}|D^{m+1}u|^2dx\right)^{1/2}\left(\int_{\Omega}|D^{m-1}u|^2dx\right)^{1/2}.
\end{equation}
In order to prove formula \eqref{m1} we recall the following identity which holds for all $u\in H^m_0(\Omega)$ and which is obtained integrating by parts $\int_{\Omega}|D^mu|^2dx$ (see e.g., \cite{gazzola}):
$$
\int_{\Omega}|D^mu|^2dx=
\begin{cases}
\int_{\Omega}|\Delta^{\frac{m}{2}}u|^2dx\,, & {\rm if\ }m{\rm\ is\ even},\\
\int_{\Omega}|\nabla\Delta^{\frac{m-1}{2}}u|^2dx\,, & {\rm if\ }m{\rm\ is\ odd}.\\
\end{cases}
$$
Assume now that $u\in H^{m+1}_0(\Omega)$ (actually $u\in H^{m+1}(\Omega)\cap H^m_0(\Omega)$ is sufficient). Let $m$ be even. Then
\begin{multline*}
\int_{\Omega}|D^mu|^2dx=\int_{\Omega}|\Delta^{\frac{m}{2}}u|^2dx=-\int_{\Omega}\nabla\Delta^{\frac{m}{2}}u\cdot\nabla\Delta^{\frac{m-2}{2}}\bar u dx\\
\leq \left(\int_{\Omega}|\nabla\Delta^{\frac{m}{2}}u|^2dx\right)^{1/2}\left(\int_{\Omega}|\nabla\Delta^{\frac{m-2}{2}}u|^2dx\right)^{1/2}\\
=\left(\int_{\Omega}|D^{m+1}u|^2dx\right)^{1/2}\left(\int_{\Omega}|D^{m-1}u|^2dx\right)^{1/2},
\end{multline*}
where we have used  H\"older's inequality. This proves \eqref{m1} in the case that $m$ is even. The case $m$ odd is treated similarly. Inequality \eqref{m1} is then proved.

In particular, when $m=2$ we deduce from \eqref{m1} that
$$
\frac{\int_{\Omega}|\nabla u|^2dx}{\int_{\Omega}u^2dx}\leq\left(\frac{\int_{\Omega}|D^2u|^2dx}{\int_{\Omega}u^2dx}\right)^{1/2}
$$
By induction on $m$ it follows that for any $u\in H^{m+1}_0(\Omega)$,
$$
\left(\frac{\int_{\Omega}|D^m u|^2dx}{\int_{\Omega}u^2dx}\right)^{1/m}\leq\left(\frac{\int_{\Omega}|D^{m+1}u|^2dx}{\int_{\Omega}u^2dx}\right)^{1/m+1}.
$$
This implies, by \eqref{minmaxD}, that $\left(\lambda_k^m\right)^{1/m}$ is an increasing function of $m$ for all $k\in\mathbb N$, hence $(\lambda_k^1)^m\leq\lambda_k^m$ for all $m,k\in\mathbb N$ (the inequality is actually strict). Hence, from \cite{friedlander_eigen} it follows that $(\mu_{k+1}^1)^m<\lambda_k^m$ for all $k,m\in\mathbb N$. 

One wish to obtain now a reverse inequality between Neumann eigenvalues of the Laplacian and the polyharmonic operator in order to close the chain. However such inequalities are unavailable, and actually it is not clear whether a sort of monotonicity holds in the Neumann case. The inequality $\mu_k^m\leq (\mu_k^1)^m$  is trivially true only if $d=1$ and follows by testing eigenfunctions of the Neumann Laplacian into \eqref{minmaxN}. On the contrary, already for $m=2$ and $d\geq 2$ it is not at all understood if such an inequality always holds. 

We prove here, as an example, that the inequality $\mu_k^2\leq (\mu_k^1)^2$ holds for convex domains in $\mathbb R^d$, $d\geq 2$. In fact, for any $u\in H^2(\Omega)$ real-valued we have
\begin{equation}\label{part}
\int_{\Omega}|D^2u|^2dx=\int_{\Omega}(\Delta u)^2dx+\frac{1}{2}\int_{\partial\Omega}\partial_{\nu}(|\nabla u|^2)d\sigma-\int_{\partial\Omega}\Delta u\partial_{\nu}ud\sigma.
\end{equation}
Here $\nu$ denotes the outer unit normal to $\partial\Omega$ and $d\sigma$ the measure element of $\partial\Omega$. Equality \eqref{part} follows from the pointwise identity $|D^2u|^2=\frac{1}{2}\Delta(|\nabla u|^2)-\nabla\Delta u\cdot\nabla u$ which holds for smooth real-valued functions $u$. Now, we note that
\begin{multline}\label{part2}
\frac{1}{2}\partial_{\nu}(|\nabla u|^2)_{|_{\partial\Omega}}=\nabla (\partial_{\nu}u)\cdot\nabla u_{|_{\partial\Omega}}-D\nu\cdot\nabla u\cdot\nabla u\\
=\nabla_{\partial\Omega}(\partial_{\nu}u)\cdot\nabla u_{|_{\partial\Omega}}+\partial_{\nu}(\partial_{\nu}u)\nu\cdot\nabla u_{|_{\partial\Omega}}-II(\nabla_{\partial\Omega} u,\nabla_{\partial\Omega} u).
\end{multline}
Here $\nabla_{\partial\Omega}u_{|_{\partial\Omega}}$ denotes the tangential component of the gradient of $u$ on the boundary, and $II(\cdot,\cdot)$ the second fundamental form on $\partial\Omega$ (in fact $II=D\nu$).

Assume now that $u\in H^2(\Omega)$ is such that $\partial_{\nu}u=0$ on $\partial\Omega$ (in the sense of $L^2(\partial\Omega)$) and that $II\geq 0$ in the sense of quadratic forms. Then $\nabla u_{|_{\partial\Omega}}=\nabla_{\partial\Omega}u$ (the gradient of $u$ restricted on the boundary belongs to the tangent space to the boundary). This fact combined with \eqref{part} and \eqref{part2}  implies that for such $u$ and $\Omega$
$$
\int_{\Omega}|D^2u|^2dx\leq\int_{\Omega}(\Delta u)^2dx.
$$
Moreover, if $\Omega$ is a convex domain, then all eigenfunctions of the Neumann Laplacian belong to $H^2(\Omega)$ by standard elliptic regularity, and their normal derivatives vanish at the boundary (in $L^2(\partial\Omega)$). Hence, when $m=2$, taking into \eqref{minmaxN} as $k$-dimensional subspace of $H^2(\Omega)$ of test functions the space generated by the first $k$ eigenfunctions of the Neumann Laplacian, we immediately obtain $\mu_k^2\leq(\mu_k^1)^2$ for all $k\in\mathbb N$. 

However, also in the convex case, we cannot conclude more than $\mu_{k+1}^2<\lambda_k^2$, which is a weaker result with respect to $\mu_{k+2}^2<\lambda_k^2$ proved in Theorem \ref{main}.

\end{remark}

\section*{Acknowledgements}
The author is thankful to Dr. Davide Buoso for stimulating discussions - on inequalities between Dirichlet eigenvalues of polyharmonic operators - which were inspirational for the observations in the first part of Remark 2.4. The author would like to thank the anonymous referee for carefully reading the manuscript and for his/her useful comments and remarks.

\def\cprime{$'$} \def\cprime{$'$} \def\cprime{$'$} \def\cprime{$'$}
  \def\cprime{$'$}


\bibliography{bibliography}{}
\bibliographystyle{amsplain}

\end{document}